\definecolor{brandeisblue}{rgb}{0.0, 0.44, 1.0}
\newtheorem{theorem}{Theorem}
\newtheorem{corollary}{Corollary}
\title{ On the $p$-Schatten Energy of Bipartite Graphs}
\author{Octavio Arizmendi, José Guerrero}
\date{\today}
\begin{document}
	\maketitle
\begin{abstract}
    \noindent We give a Coulson integral formula and a Coulson-Jacobs formula for the $p$-Schatten energy. We use this formulas to compare the $p$-Schatten energy of different trees by using a quasiorder, and establish the maximality of paths among all trees.
\end{abstract}

\section{Introduction}
 For a graph $G=(V,E)$, we denote by $A_G$, the adjacency matrix of $G$, let $\lambda_1,\cdots,\lambda_n$ be its eigenvalues, and  $\phi(G,x)$ its characteristic polynomial.  The $p$-Schatten energy of $G$ is defined by the formula
$$\mathcal{E}_p(G)=\sum^n_{i=1}\lambda_i^p.$$

The purpose of this short note is to give an integral formula for the $p$-Schatten energy of a bipartite graph. In other words, we give a formula the $p$-Schatten norm of its adjacency matrix.

\begin{theorem}\label{thm:main theorem 1}
Let $G$ be a bipartite graph on $n$ vertices. For $0<p<2$, the following integral formula holds
	\begin{align}
	\label{eq:CoulsonPSchatten}
	\mathcal{E}_{p}(G) &= \frac{2\sin \left(\frac{p \pi}{2}\right)}{\pi} 
	\int_{0}^{ \infty} z^{p-1} \left(n - iz \frac{\phi'(G,iz)}{\phi(G,iz)}  \right) dz.
    \end{align}
\end{theorem}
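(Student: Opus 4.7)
The plan is to start from the one-eigenvalue identity
\begin{equation*}
|\lambda|^{p} \;=\; \frac{2\sin(p\pi/2)}{\pi}\int_{0}^{\infty} \frac{z^{p-1}\lambda^{2}}{z^{2}+\lambda^{2}}\, dz, \qquad 0<p<2,
\end{equation*}
which follows from the classical Mellin evaluation $\int_{0}^{\infty}u^{p-1}/(1+u^{2})\,du=\pi/(2\sin(p\pi/2))$ after the substitution $u=z/|\lambda|$ (or from a semicircle contour argument). Since $G$ is bipartite its spectrum is symmetric about $0$, so $\mathcal{E}_{p}(G)$ is unambiguously $\sum_{i=1}^{n}|\lambda_{i}|^{p}$; summing the identity over $i$ and invoking Tonelli (the integrand is non-negative) yields
\begin{equation*}
\mathcal{E}_{p}(G) \;=\; \frac{2\sin(p\pi/2)}{\pi}\int_{0}^{\infty} z^{p-1} \sum_{i=1}^{n}\frac{\lambda_{i}^{2}}{z^{2}+\lambda_{i}^{2}}\, dz.
\end{equation*}

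The second step is to rewrite the sum in terms of $\phi$ and $\phi'$. Using $\phi(G,x)=\prod_{i}(x-\lambda_{i})$, logarithmic differentiation gives $\phi'(G,x)/\phi(G,x)=\sum_{i}1/(x-\lambda_{i})$, so
\begin{equation*}
iz\,\frac{\phi'(G,iz)}{\phi(G,iz)} \;=\; \sum_{i}\frac{iz}{iz-\lambda_{i}} \;=\; \sum_{i}\frac{z^{2}-iz\lambda_{i}}{z^{2}+\lambda_{i}^{2}}.
\end{equation*}
The imaginary part $-iz\sum_{i}\lambda_{i}/(z^{2}+\lambda_{i}^{2})$ vanishes by the bipartite $\pm$-symmetry of the spectrum, leaving $iz\,\phi'(G,iz)/\phi(G,iz)=z^{2}\sum_{i}1/(z^{2}+\lambda_{i}^{2})$. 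Combined with the elementary identity $\sum_{i}\lambda_{i}^{2}/(z^{2}+\lambda_{i}^{2})=n-z^{2}\sum_{i}1/(z^{2}+\lambda_{i}^{2})$, this turns the integrand into $n-iz\,\phi'(G,iz)/\phi(G,iz)$ and produces \eqref{eq:CoulsonPSchatten}.

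The only delicate points are bookkeeping rather than serious obstacles: checking that the integral converges at the two endpoints (the constraint $0<p<2$ is exactly what is needed, since the integrand behaves like $z^{p-1}$ near $0$ and decays like $z^{p-3}$ near $\infty$ once the $n$ cancels the leading $n/(iz)$ from $\phi'/\phi$), and justifying the interchange of sum and integral, which is automatic from positivity. The conceptual content really lies in the single formula $|\lambda|^{p}=\frac{2\sin(p\pi/2)}{\pi}\int_{0}^{\infty}z^{p-1}\lambda^{2}/(z^{2}+\lambda^{2})\,dz$ together with the cancellation of the odd-in-$\lambda$ terms that requires bipartiteness; the rest is manipulation of the logarithmic derivative of $\phi(G,\,\cdot\,)$ in the spirit of the classical Coulson formula, which is recovered in the limit $p\to 1$.
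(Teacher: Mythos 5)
Your proof is correct and follows essentially the same route as the paper: both rest on the Mellin integral $\int_{0}^{\infty}u^{p-1}/(1+u^{2})\,du=\pi/(2\sin(p\pi/2))$ together with the partial-fraction identity expressing $n-iz\,\phi'(G,iz)/\phi(G,iz)$ as $\sum_{i}\lambda_i^{2}/(z^{2}+\lambda_i^{2})$ via the $\pm$-symmetry of the bipartite spectrum. The only (cosmetic) difference is that you work with all $n$ eigenvalues at once and let the odd-in-$\lambda$ terms cancel, whereas the paper first reduces to an even number of vertices and pairs the eigenvalues as $\pm\lambda_j$.
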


This formula generalizes the famous Coulson integral formula \cite{Coulson40} for the energy  (corresponding to $p=1$) in the case of a bipartite graph, which has been broadly used in mathematical chemistry.  One important feature of Coulson Integral formula is that it has been proved to be very effective to compare the energies of different trees. Our formula allows to extend this feature for any $0<p<2$, in particular we use such comparison to answer the following question of Nikiforov \cite[Question 4.52.(b)]{Nik}. 
\begin{theorem}
Let $T_n$ be a tree on $n$ vertices, then For $0<p<2$, 	\begin{align}
	\label{eq:comparison1}
	\mathcal{E}_{p}(S_n) \leq \mathcal{E}_{p}(T_n)\leq \mathcal{E}_{p}(P_n).
    \end{align}
\end{theorem}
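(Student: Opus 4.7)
The plan is to reduce the inequality to a coefficient-wise comparison of characteristic polynomials by rewriting the Coulson integral of Theorem~\ref{thm:main theorem 1} in a manifestly monotone ``Coulson--Jacobs'' form, and then to invoke the classical extremality of stars and paths for matching numbers of trees.

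\emph{First,} for a bipartite graph $G$ on $n$ vertices I would write
\[
\phi(G,x) = \sum_{k=0}^{\lfloor n/2 \rfloor} (-1)^k b_k(G)\, x^{n-2k}, \qquad b_k(G) \ge 0,
\]
and set $P_G(z) := \sum_k b_k(G)\, z^{n-2k}$. A direct substitution $x = iz$ gives $\phi(G,iz) = i^n P_G(z)$ and $iz\,\phi'(G,iz)/\phi(G,iz) = z P_G'(z)/P_G(z)$, so the integrand in Theorem~\ref{thm:main theorem 1} becomes $z^{p-1}\bigl(n - z P_G'(z)/P_G(z)\bigr)$, a quantity that is already non-negative because $n P_G(z) - z P_G'(z) = \sum_k 2k\, b_k z^{n-2k} \ge 0$.

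\emph{Next,} letting $Q_G(u) := \sum_k b_k(G)\, u^k$ so that $P_G(z)/z^n = Q_G(1/z^2) \ge 1$ for $z > 0$, the substitution $u = 1/z^2$ followed by integration by parts should yield the Coulson--Jacobs form
\[
\mathcal{E}_p(G) \;=\; \frac{p \sin(p\pi/2)}{\pi}\int_0^\infty u^{-1 - p/2}\, \log Q_G(u)\, du.
\]
The boundary terms vanish precisely when $0 < p < 2$: at $u \to 0$ because $\log Q_G(u) \sim b_1(G)\, u$, and at $u \to \infty$ because $\log Q_G(u) \sim k^{*}(G)\, \log u$ where $k^{*}(G)$ is the matching number. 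This is the most delicate step, and also the one I expect to be the main obstacle: justifying the integration by parts uniformly in $G$ and seeing that the exponent range $0 < p < 2$ is exactly what makes both boundary terms disappear and the integral converge.

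\emph{Finally,} since $\sin(p\pi/2) > 0$ and $u^{-1-p/2} > 0$ on $(0,\infty)$, any two bipartite graphs with $b_k(G_1) \le b_k(G_2)$ for all $k$ satisfy $Q_{G_1}(u) \le Q_{G_2}(u)$ pointwise on $u \ge 0$, and therefore $\mathcal{E}_p(G_1) \le \mathcal{E}_p(G_2)$. For a tree $T$ the characteristic polynomial coincides with the matching polynomial, so $b_k(T) = m(T,k)$ counts $k$-matchings of $T$, and Gutman's classical theorem gives
\[
m(S_n, k) \;\le\; m(T_n, k) \;\le\; m(P_n, k), \qquad k = 0, 1, 2, \ldots,
\]
for every tree $T_n$ on $n$ vertices. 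Combining this quasi-order with the monotonicity in the Coulson--Jacobs formula yields \eqref{eq:comparison1} at once.
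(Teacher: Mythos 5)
Your proposal is correct and follows essentially the same route as the paper: the Coulson integral of Theorem~\ref{thm:main theorem 1} is converted by integration by parts into a Coulson--Jacobs-type logarithmic formula (your single-graph version with $Q_G(u)$ is just the paper's difference formula with $G_2$ taken to be the edgeless graph, after the cosmetic substitution $u=1/z^2$), monotonicity in the coefficients $b_k$ then gives monotonicity of $\mathcal{E}_p$ under the quasi-order, and the classical relation $S_n \preceq T_n \preceq P_n$ finishes the argument. Your explicit verification of the vanishing boundary terms in the integration by parts, and of why $0<p<2$ is exactly the right range, is a welcome addition that the paper leaves implicit.
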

We note that this comparison theorem is in contrast with the result of Csikvari \cite{csikvari2010poset}  proving that  that for $p\in 2\mathbb{N}$,
\begin{align}
	\label{eq:comparison2}
	\mathcal{E}_{p}(S_n) \geq \mathcal{E}_{p}(T_n)\geq \mathcal{E}_{p}(P_n). 
    \end{align}
The general case $p>2$ is still open. See also Lov\'asz and Pelik\'an \cite{lovasz1973} for the case $p=\infty$.

While writing this paper we became aware of the papers \cite{Du,QuiaoLuZhang16, QiaoLuZhang17} which provide formulas which are similar to \eqref{eq:CoulsonPSchatten}. The method of proof here resembles the methods in \cite{Du}. Since the formulas in \cite{Du} have no restriction on $p$, it would be interesting to explore if their formulas allow to prove  \eqref{eq:comparison2} for all $p>2$.

\section{A Coulson integral formula}

In this section, we will give a proof of Theorem \ref{thm:main theorem 1}. That is we find a Coulson-like integral formula for the $p$-Schatten energy of a bipartite graph. We will consider the case where the number of vertices is even, the odd case follows from the even case. Indeed, on one hand, the $p$-Schatten energy does not change when adding isolated vertices, since we only add $0$'s in the spectrum. On the other hand, by noticing that, if  $G\cup\{v\}$ denotes the graph $G$ together with an isolated vertex, then
$$\frac{z(\phi(G\cup\{v\},z))'}{\phi(G\cup\{v\},z)}=\frac{z(z\phi(G,z))'}{z(\phi(G,z))}=\frac{z(z\phi(G,z)'+\phi(G,z))}{z(\phi(G,z))}=\frac{z(\phi(G,z))'}{(\phi(G,z))}+1.$$
Thus, we may deduce that the righ-hand side of \eqref{eq:CoulsonPSchatten} is also not affected by adding an isolated vertex.
 For a graph $G$, if $A_G$ is its adjacency matrix, then 
$\phi(G,z)=det(zI-A_G)$ which in terms of the eigenvalues of $A_G$ is given by $\phi(G,z)= \prod_{j=1}^n (z-\lambda_j)$. We will use the following identity which is easy to prove, $$\sum^n_{j=1} \frac{1}{z-\lambda_j}=\frac{d}{dz}[log(\phi(z))]=\frac{\phi'(G,z)}{\phi(G,z)}.$$
Now, we may proceed to prove Theorem \ref{thm:main theorem 1}. We start from the elementary identity,
	\[
	\frac{a}{it+a} - \frac{a}{it-a} = \frac{a(it-a) - a(it+a)}{(it-a)(it+a)} = \frac{-2a^2}{(it-a)(it+a)} = \frac{2a^2}{t^2+a^2}. 
	\]
	
If $G$ is a bipartite graph with $2n$ vertices,  we can write its spectrum as $\text{Spec}(G)=\{\pm \lambda_j\}_{j=1}^{n},$ for some $\lambda_i \geq 0$. From the above identity, we have,
		\begin{equation}
	\label{eq:pschatten1}
	\sum_{j=1}^{n}   \frac{2\lambda_j^2}{z^2 + \lambda_j^2} = - \sum_{j=1}^{n} \lambda_j \left(\frac{1}{iz-\lambda_j} - \frac{1}{iz+\lambda_j} \right)=  -\sum_{\lambda \in \text{Spec}(G)} \frac{\lambda}{iz-\lambda} = 2n - iz \frac{\phi'(G,iz)}{\phi(G,iz)}
	\end{equation}
where $\phi=\phi_G$.

On other hand, let us define $$f(\alpha)= \int_{0}^{\infty} \frac{t^{\alpha}}{t^2+1}dt,$$ for $\alpha \in (-1,1)$. Using a simple change of variables, we have
	\begin{equation}
		\label{eq:pschatten2}
		\int_{0}^{\infty} \frac{t^{\alpha}}{t^2+a^2}dt = \int_{0}^{\infty} \frac{a^{\alpha}(t/a)^{\alpha}}{a^2((t/a)^2+1)} dt= 
		a^{\alpha-1}\int_{0}^{\infty} \frac{s^\alpha}{s^2+1}ds = a^{\alpha-1} f(\alpha).
	\end{equation}
With the property of $f$ given in equation \eqref{eq:pschatten2},  we can integrate \eqref{eq:pschatten1} and obtain a formula involving the $(\alpha+1)$-Schatten energy

    \begin{align*}
	\int_{0}^{ \infty} z^{\alpha} \left(2n - iz \frac{\phi'(G,iz)}{\phi(G,iz)} \right) dz &= \sum_{j=1}^{n}  2\lambda_j^2 	\int_{0}^{ \infty} \frac{z^{\alpha}}{z^2 + \lambda_j^2} dz 
	= \sum_{j=1}^{n}  2\lambda_j^2  \lambda_j^{\alpha - 1}f(\alpha) \\
	&= 2 \sum_{j=1}^{n}\lambda_j^{\alpha +1} f(\alpha) 
	= f(\alpha) \mathcal{E}_{\alpha+1}(G) .
    \end{align*}

	Replacing $p = \alpha+1,$ we can write
	\begin{align}
	\label{eq:CoulsonPSchattenagain}
	\mathcal{E}_{p}(G) &= \frac{1}{f(p-1) }
	\int_{0}^{ \infty} z^{p-1} \left(2n - iz \frac{\phi'(G,iz)}{\phi(G,iz)}  \right) dz ,
    \end{align}
    and we arrive at the stated formula by realizing that  
    \[f(\alpha)=\int_{0}^{\infty} \frac{t^{\alpha}}{t^2+1}dt = \frac{\pi}{2 \cos\left(\frac{\alpha \pi}{2}\right)},\]
    which finishes the proof by the simple identity relation $\cos(t)=\sin(t+\frac{\pi}2)$.

\section{Applications}

In this section, we will extend the technique of quasi-order to compare the $p$-Schatten energy of bipartite graphs. In order to do this, recall that, if $G$ is a bipartite graph with $2n$ vertices, then  its characteristic polynomial has the following form 
\begin{equation}
\sum_{k\geq 0} (-1)^k b_{2k} x^{2n-2k},
\end{equation}
where $b_{2k}\geq 0$ for all $k$. The quasi-order $\preceq$ is defined for bipartite graphs as follows: $G_1 \preceq G_2$ if $b_{2k}(G_1)\leq b_{2k} (G_2)$ for all $k$, see \cite{GZ,Zh} or Section 4.3 of \cite{Glibro}. 

We note that, for $z\in \mathbb{R}$ we have
\begin{align*}
\phi(G, iz) &= \sum_{k\geq 0} (-1)^k b_{2k} (iz)^{2n-2k}=\sum_{k\geq 0} b_{2k} z^{2n-2k}
\end{align*}
and thus $\phi(G, iz)>0$ and $\phi(G_2, iz)\leq\phi(G_1, iz)$ if $G_2\preceq G_1$, for $z\in [0, \infty)$.

Our aim is to use the integral formula of Theorem \ref{thm:main theorem 1}, to compare $p$-Schatten energy of two graphs, $G_1$ and $G_2$. For this, we will need to modify \eqref{eq:CoulsonPSchatten} as done in classical paper of Coulson and Jacobs \cite{coulsonjacobs}

\begin{theorem}
[Coulson-Jacobs formula for $p$-Energy]

Let $G_1$ and $G_2$ be bipartite graphs on $2n$ vertices, and $0<p<2$, then
	\begin{align}
	\label{eq:CoulsonJacobs}
	\mathcal{E}_{p}(G_1) -\mathcal{E}_{p}(G_2) &= \frac{2p \sin \left(\frac{p \pi}{2}\right)}{\pi}
	\int_{0}^{ \infty} z^{p-1} \log\left(\frac{\phi(G_1,iz)}{\phi(G_2,iz)} \right) dz.
    \end{align}
\end{theorem}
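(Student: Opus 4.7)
The plan is to derive the Coulson--Jacobs formula by subtracting two copies of the Coulson integral formula from Theorem~\ref{thm:main theorem 1} and integrating by parts. Writing both bipartite graphs on the common vertex set of size $2n$, I would apply equation~\eqref{eq:CoulsonPSchattenagain} to each of $G_1$ and $G_2$ and subtract; the constant $2n$ cancels, leaving
\[
\mathcal{E}_p(G_1) - \mathcal{E}_p(G_2) = \frac{2\sin(p\pi/2)}{\pi}\int_0^\infty z^{p-1}\,iz\!\left(\frac{\phi'(G_2,iz)}{\phi(G_2,iz)} - \frac{\phi'(G_1,iz)}{\phi(G_1,iz)}\right)dz.
\]

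The next step is to recognize a logarithmic derivative. Since $\frac{d}{dz}\log\phi(G,iz) = i\,\phi'(G,iz)/\phi(G,iz)$, the factor $iz\cdot\phi'(G,iz)/\phi(G,iz)$ equals $z\cdot\frac{d}{dz}\log\phi(G,iz)$, so the bracketed expression becomes $-\frac{d}{dz}\log\frac{\phi(G_1,iz)}{\phi(G_2,iz)}$ and the full integrand rewrites as $-z^{p}\frac{d}{dz}\log\frac{\phi(G_1,iz)}{\phi(G_2,iz)}$. I then integrate by parts with $u=z^{p}$ and $v=\log\frac{\phi(G_1,iz)}{\phi(G_2,iz)}$: the derivative transfers onto $z^p$ to produce the factor $p$, the two minus signs cancel, and the stated formula~\eqref{eq:CoulsonJacobs} drops out immediately once one checks that the boundary term $[z^{p}v]_0^\infty$ vanishes.

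The main obstacle is precisely this boundary analysis, and it is where the hypothesis $0<p<2$ is used. At $z\to\infty$, since both characteristic polynomials are monic of degree $2n$, the expansions $\phi(G_j,iz)=z^{2n}+O(z^{2n-2})$ imply $\log\frac{\phi(G_1,iz)}{\phi(G_2,iz)}=O(z^{-2})$; multiplied by $z^{p}$ with $p<2$, this vanishes. At $z\to 0^+$, the representation $\phi(G_j,iz)=\sum_{k\ge 0}b_{2k}(G_j)z^{2n-2k}$ shows the polynomials are strictly positive reals with possibly different lowest powers $z^{2m_j}$ (corresponding to the multiplicity of the eigenvalue $0$), hence the log ratio is $O(\log z)$ near the origin, and $z^{p}\log z\to 0$ since $p>0$. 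The same two estimates also guarantee absolute integrability of $z^{p-1}\log\frac{\phi(G_1,iz)}{\phi(G_2,iz)}$ on $(0,\infty)$, so the integration by parts is fully justified and collecting constants yields exactly the coefficient $\frac{2p\sin(p\pi/2)}{\pi}$ in the statement.
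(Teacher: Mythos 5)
Your proposal is correct and follows essentially the same route as the paper: subtract the two instances of the Coulson integral formula, recognize the logarithmic derivative, and integrate by parts to produce the factor $p$. Your explicit verification that the boundary term $[z^{p}\log(\phi(G_1,iz)/\phi(G_2,iz))]_0^\infty$ vanishes (using $p<2$ at infinity and $p>0$ at the origin) is a welcome detail that the paper leaves implicit, but it is the same argument.
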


\begin{proof}
From Theorem \ref{thm:main theorem 1} we see that 
	\begin{align*}
	\mathcal{E}_{p}(G_1)-	\mathcal{E}_{p}(G_2) 
&= \frac{2\sin \left(\frac{p \pi}{2}\right)}{\pi} 
\int_{0}^{ \infty} iz^{p}\left( \frac{\phi'(G_2,iz)}{\phi(G_2,iz)} - \frac{\phi'(G_1,iz)}{\phi(G_1,iz)}  \right) dz  \\
& = \frac{2\sin \left(\frac{p \pi}{2}\right)}{\pi} 
\int_{0}^{ \infty} z^{p} \frac{d}{dz}\log\left(\frac{\phi(G_2,iz)}{\phi(G_1,iz)}  \right) dz   \\
&= -\frac{2\sin \left(\frac{p \pi}{2}\right)}{\pi} 
\int_{0}^{ \infty} p z^{p-1} \log\left(\frac{\phi(G_2,iz)}{\phi(G_1,iz)}  \right) dz
\\&= \frac{2p\sin \left(\frac{p \pi}{2}\right)}{\pi} 
\int_{0}^{ \infty}  z^{p-1} \log\left(\frac{\phi(G_1,iz)}{\phi(G_2,iz)}  \right) dz
\end{align*}
where we used integration by parts in the second to last equality.   \end{proof}

\begin{corollary}
Let $G_1$ and $G_2$ be two bipartite graphs. If $G_1\preceq G_2$ then $\mathcal{E}(G_1)\leq\mathcal{E}(G_2)$. 
\end{corollary}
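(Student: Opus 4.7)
The plan is to read the corollary through the Coulson--Jacobs formula just proved and verify that every factor in the integrand has a definite sign when $G_1 \preceq G_2$. I would first note that the statement really wants to conclude $\mathcal{E}_p(G_1) \leq \mathcal{E}_p(G_2)$ for $0<p<2$ (the classical case $p=1$ being $\mathcal{E}$); the proof is uniform in $p$.

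Before invoking the formula, I would handle a small bookkeeping issue: the Coulson--Jacobs identity was stated for two bipartite graphs on a common even number $2n$ of vertices. If $G_1$ and $G_2$ have different orders, I would pad the smaller one with isolated vertices, and if the common order is odd, pad both with a single isolated vertex. Adding an isolated vertex multiplies $\phi$ by $x$, so the coefficients $b_{2k}$ are preserved; hence the quasi-order relation $G_1 \preceq G_2$ is unchanged, and the $p$-Schatten energy is also unchanged since we only introduce eigenvalue zeros. So without loss of generality I may assume both graphs have $2n$ vertices.

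Next I would apply the formula
\[
\mathcal{E}_{p}(G_2) - \mathcal{E}_{p}(G_1) \;=\; \frac{2p \sin(p\pi/2)}{\pi}\int_{0}^{\infty} z^{p-1}\,\log\!\left(\frac{\phi(G_2,iz)}{\phi(G_1,iz)}\right) dz,
\]
obtained from Coulson--Jacobs by swapping the roles of $G_1$ and $G_2$. The preceding discussion in the paper shows that for $z\in[0,\infty)$ we have $\phi(G,iz) = \sum_{k\geq 0} b_{2k}(G)\, z^{2n-2k} > 0$, and that $G_1 \preceq G_2$ implies $\phi(G_1,iz) \leq \phi(G_2,iz)$ term by term. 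Therefore the logarithm is nonnegative on $(0,\infty)$. The prefactor $\tfrac{2p\sin(p\pi/2)}{\pi}$ is strictly positive for $0<p<2$, and $z^{p-1}>0$ on $(0,\infty)$, so the whole integrand is nonnegative and the difference $\mathcal{E}_{p}(G_2)-\mathcal{E}_{p}(G_1)$ is nonnegative, as claimed.

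There is no real obstacle here beyond setup: the only delicate point is confirming that everything in the integrand is sign-definite, which reduces to the already-observed positivity of $\phi(G,iz)$ on the imaginary axis for bipartite $G$ and the componentwise monotonicity of these polynomials under $\preceq$. The reduction to equal (even) vertex count is the only place where one has to be slightly careful that padding with isolated vertices preserves both sides of the desired inequality simultaneously, and this is immediate from the definitions.
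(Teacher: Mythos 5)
Your proposal is correct and follows essentially the same route as the paper: apply the Coulson--Jacobs formula with the roles of $G_1$ and $G_2$ swapped, then use the positivity of $\phi(G,iz)$ on $[0,\infty)$ and the coefficientwise monotonicity under $\preceq$ to conclude the integrand is nonnegative. Your extra care about padding with isolated vertices to equalize (and even out) the vertex counts is a reasonable bookkeeping addition that the paper leaves implicit, but it does not change the substance of the argument.
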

\begin{proof}
Notice that if $G_1\preceq G_2$ then $\phi(G_2, iz)\geq \phi(G_1, iz)$ for all $z\in \mathbb{R}$ and then $log\left(\frac{\phi(G_2, iz)}{\phi(G_1, iz)}\right)\geq0,$ from where we see that
\begin{align}
	\mathcal{E}_{p}(G_2) -\mathcal{E}_{p}(G_1) &= \frac{2p \sin \left(\frac{p \pi}{2}\right)}{\pi}
	\int_{0}^{ \infty} z^{p-1} log\left(\frac{\phi(G_2,iz)}{\phi(G_1,iz)} \right) dz\geq0,
    \end{align}
since the above integral must be positive.
\end{proof}

Now, Theorem 2 is a direct consequence of the following known relation in the quasi-order for trees (see for example \cite{GZ} or Theorem 4.6 in \cite{Glibro}).

\begin{theorem}
Let $T_n$ be a tree on $n$ vertices, then 	\begin{align}
		S_n \preceq T_n\preceq P_n.
    \end{align}
\end{theorem}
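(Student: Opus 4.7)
The plan is to translate the quasi-order comparison on trees into a comparison of matching numbers, for which both required inequalities are classical. Recall the Sachs coefficient formula that expresses $b_{2k}(G)$ as a signed sum over Sachs subgraphs of $G$ on $2k$ vertices (vertex-disjoint unions of edges and cycles). When $T$ is a tree there are no cycles, so every Sachs subgraph on $2k$ vertices is a $k$-matching, and one obtains the identity $b_{2k}(T) = m_k(T)$, where $m_k(T)$ denotes the number of $k$-matchings of $T$ (with the convention $m_0=1$). Consequently, for trees the relation $T_1 \preceq T_2$ is equivalent to the termwise inequality $m_k(T_1) \le m_k(T_2)$ for every $k \ge 0$.

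The lower bound $S_n \preceq T_n$ follows from a direct count. Since every edge of $S_n$ is incident to the central vertex, we have $m_0(S_n)=1$, $m_1(S_n)=n-1$, and $m_k(S_n)=0$ for $k\ge 2$. Any tree on $n$ vertices has $n-1$ edges, so $m_1(T_n)=n-1$ as well, while $m_k(T_n) \ge 0 = m_k(S_n)$ holds trivially for $k \ge 2$.

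For the upper bound $T_n \preceq P_n$ the plan is to exhibit a chain of local transformations $T_n = T^{(0)} \to T^{(1)} \to \cdots \to P_n$ that do not decrease any $m_k$. A convenient operation is a \emph{branch grafting}: pick a vertex $v$ of degree at least $3$ lying on a longest path $Q$ of $T_n$, choose a branch $B$ attached at $v$ that is not contained in $Q$, detach $B$ and reattach it at a leaf endpoint of $Q$, obtaining a new tree $T_n'$ whose longest path is strictly longer (or which has strictly fewer vertices of degree at least $3$). Iterating this procedure must terminate at $P_n$, so it suffices to prove $m_k(T_n) \le m_k(T_n')$ at every grafting step and every $k$.

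The main obstacle is thus verifying that the grafting is monotone for every $m_k$ simultaneously. My approach is to apply the matching recursion $m_k(T) = m_k(T-e) + m_{k-1}(T - V(e))$ along the edge joining $B$ to the rest of the tree in both $T_n$ and $T_n'$, reducing the difference $m_k(T_n') - m_k(T_n)$ to a comparison of matching numbers on smaller forests obtained by pruning $B$ and its attachment region. Here one constructs an explicit injection between the corresponding matching sets, exploiting the fact that $B$ sits on a longer pendant arm in $T_n'$ than in $T_n$, which can accommodate at least as many saturated edges. This combinatorial injection is the heart of the proof; once established it is inherited through the full grafting chain, completing $T_n \preceq P_n$ and, combined with the star inequality above, the theorem.
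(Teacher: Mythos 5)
First, note that the paper does not prove this theorem at all: it is quoted as a known result with references to the literature (Gutman--Zhang and Theorem 4.6 of the Graph Energy monograph), so any complete argument you give is necessarily ``a different route.'' Your reduction of $\preceq$ to matching numbers via $b_{2k}(T)=m_k(T)$ is correct for trees (the Sachs subgraphs of a forest are exactly the matchings), and your proof of the lower bound $S_n\preceq T_n$ is complete: $m_0=1$, $m_1$ equals the number of edges $n-1$ for every tree, and $m_k(S_n)=0$ for $k\ge 2$. That half is fine.

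The upper bound, however, has a genuine gap: the monotonicity of your grafting step is asserted, not proved, and it is precisely the hard part. Writing $T_0=T-B$ and decomposing along the edge joining $B$ (with root $r$) to its attachment vertex gives
\begin{equation*}
m_k(T')-m_k(T)=\sum_{j\ge 0}\bigl(m_j(T_0-q_0)-m_j(T_0-v)\bigr)\,m_{k-1-j}(B-r),
\end{equation*}
so what you actually need is the lemma that deleting the endpoint $q_0$ of a longest path dominates, coefficientwise, deleting the internal vertex $v$. You never state or prove this, and ``the longer pendant arm can accommodate at least as many saturated edges'' is not an injection. This cannot be waved through, because closely related shift operations are \emph{not} uniformly monotone: if $G_{k,l}$ denotes a graph with pendant paths of lengths $k,l$ at a common vertex, then $G_{1,1}\prec G_{2,0}$ but $G_{2,2}\succ G_{3,1}$ (e.g.\ $G=P_2$: the spider with legs $2,2$ has a perfect matching while the one with legs $3,1$ does not), so the direction of such comparisons flips with parity, and Gutman's actual ordering of the $G_{k,l}$ interleaves the parities. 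Your specific operation may well be monotone, but establishing that is the entire content of the theorem. A cleaner route, which avoids transformations altogether, is induction on $n$ over \emph{forests}: for a pendant edge $uv$ with $u$ a leaf, $m_k(T)=m_k(T-u)+m_{k-1}(T-u-v)\le m_k(P_{n-1})+m_{k-1}(P_{n-2})=m_k(P_n)$, using the same recursion for the path. As written, your proposal proves the star inequality but only outlines the path inequality.
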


\bibliographystyle{plain}
\bibliography{bibCoulsonPSchatten} 




\end{document}